\documentclass[11pt,a4paper]{article}
\usepackage{amsmath}
\usepackage{amsfonts}
\usepackage{amssymb}
\usepackage{amsthm}
\usepackage{amscd}
\usepackage{graphicx}
\usepackage{multirow}
\pagestyle{plain}
\frenchspacing
\parindent=0cm
\oddsidemargin=0cm

\evensidemargin=0cm
\textwidth=15.9cm
\headheight=0cm
\topmargin=-1cm
\textheight=24cm

\usepackage[colorlinks=true,linkcolor=blue,citecolor=blue]{hyperref}

\newcommand{\C}[1][]{\ensuremath{{\mathbb{C}^{#1}} }}
\newcommand{\R}[1][]{\ensuremath{{\mathbb{R}^{#1}} }}
\renewcommand{\S}[1][]{\ensuremath{{\mathbb{S}^{#1}} }}
\renewcommand{\H}[1][]{\ensuremath{{\mathbb{H}^{#1}} }}
\newcommand{\Q}[1][]{\ensuremath{{\mathbb{Q}^{#1}} }}

\newcommand{\M}{{\cal M}}

\newcommand{\<}{\langle}
\renewcommand{\>}{\rangle}
\newcommand{\ga}{\gamma}
\newcommand{\pa}{\partial}
\newcommand{\de}{\delta}

\newcommand{\eps}{\epsilon}

\newcommand{\ka}{\kappa}
\newcommand{\la}{\lambda}

\newtheorem{lemm}{Lemma}[section]



\title{Surfaces with one constant principal curvature in three-dimensional space forms}

\author{Henri Anciaux\thanks{supported by CNPq (PQ 306154/2011-0) and Fapesp (2011/21362-2)} 
       }

\date{}

\begin{document}

\maketitle

\begin{abstract}
\noindent We study surfaces with one constant principal curvature  in Riemannian and Lorentzian three-dimensional space forms.  Away from umbilic points they are characterized as one-parameter foliations by curves of  constant curvature, each of them being centered at a point of a  regular curve and contained in its normal plane.
In some cases, a kind of trichotomy phenomenon is observed: the curves of the foliations may be  circles, hyperbolas or horocycles,  depending of whether the constant principal curvature is respectively larger, smaller of equal to one (not necessarily in this order). We describe explicitly some examples showing that there do exist complete surfaces with one constant principal curvature enjoying both umbilic and non-umbilic   points.
\end{abstract}

\bigskip

\centerline{\small \em 2010 MSC: 53C42
\em }

\section*{Introduction}

Surfaces of $3$-dimensional pseudo-Riemannian manifolds having one principal curvature constant constitute a  natural subclass of the class of  Weingarten surfaces. 

\medskip

The  study of such  surfaces has been addressed for the first time in \cite{ST}, where it was proved that a complete orientable surface of Euclidean $3$-space $\mathbb E^3$ with one constant principal curvature equal to $r^{-1}, r > 0$, must be a round sphere  or a tube  over a regular curve, both of radius $r$. 
In particular, a complete, connected surface of $\mathbb E^3$ with one constant principal curvature is either totally umbilic or  free of umbilic points. A tube of radius $r$ over a regular curve $\ga$, called its \em generating curve, \em is the set of points which lie at distance $r$ from $\ga$. In particular, a tube is  foliated by round circles of  radius $r$, each of them being centered at a point of $\ga$ and contained in its normal space. The result holds exactly in the same form in the case of surfaces with one constant principal curvature in the $3$-sphere $\S^3$, although there does not seem to be any account of this fact in the literature.  

\medskip

It turns out that the situation is quite richer in other pseudo-Riemannian space forms, either if the curvature is negative (e.g.\ the hyperbolic space), or if the induced metric is indefinite (e.g.\ in the Minkowski space). 
In \cite{AlGa1}, surfaces of hyperbolic $3$-space $\H^3$ with one constant curvature larger than one have been characterized, while the same result is obtained, by the same method, in \cite{AlGa2}  in the case of spacelike surfaces with one constant curvature smaller than one in $3$-dimensional de Sitter space $d\S^3$. It is also observed, through the description of a couple of examples, that a surface of $\H^3$  with one constant principal curvature smaller than or equal to one, as well as a spacelike surface of $d\S^3$  with one constant principal curvature larger than or equal to one, needs not to be free of umbilic points.

\medskip

The purpose of this paper is to give a description of surfaces having one constant principal curvature  and no umbilic points in the six possible Riemannian or Lorentzian space forms. The general picture is that these surfaces may be regarded as "generalized" tubes, in the sense that they are one-parameter foliations by curves of  constant curvature. Each of these curves are centered at a point of a generating curve and contained in its normal plane.
In some cases, like in $\H^3$ or for spacelike surfaces in $d\S^3$, a kind of trichotomy phenomenon is observed: the curves of the foliations may be  circles, hyperbolas or horocycles,  depending of whether the curvature is respectively larger, smaller of equal to one (not necessarily in this order). 

\medskip

We mention an analogous problem which has been widely studied, namely that of real hypersurfaces in complex space forms whose Hopf field is a  principal direction (see \cite{IR}). The analogy comes from the fact that  such hypersurfaces (called \em Hopf hypersurfaces\em) have the property that the principal curvature associated to the Hopf field must be constant.  In some cases Hopf hypersurfaces are characterized as tubes over complex submanifolds (\cite{CR},\cite{Mo}), and a similar phenomenon of trichotomy, not existing  in complex projective space,  is observed  in the complex hyperbolic space (\cite{IR}). This fact actually has served as an inspiration for the present work.
We also mention that surfaces of $\mathbb E^3$ having one constant principal curvature appear, quite unexpectedly,  in  a problem of convex geometry, the \em Blaschke-Lebesgue problem \em (see \cite{AnGu}): the regular parts of the boundary of a convex body of $\mathbb E^3$ which minimizes the volume among bodies of constant width have one constant principal curvature. Moreover,  the normal congruence (i.e.\ the set of normal geodesics)  of a surface having one constant principal curvature is a marginally trapped Lagrangian surface (see \cite{An}).

\section{Notations and statement of the Main Theorem}
Up to rescaling and anti-isometry, there exist six simply connected,  pseudo-Riemannian manifolds of dimension $3$ and constant sectional curvature.
 For our purposes, it is convenient to describe them as hyperquadrics of $\R^4$ endowed with one of the canonical pseudo-Riemannian metrics, that will be denoted by $\<.,.\>$. 
More precisely, for $p \in \{ 0,1,2\}$, we set
$$\R^4_p :=\left(\R^4,  -\sum_{i=1}^p dx_i^2 + \sum_{i=p+1}^4 dx_i^2 \right).$$
Setting $||x||^2:=\<x,x\>$, we define
\begin{itemize}
\item[-] $\Q^3_{0,0}=\{ x \in \R^4_0 \, | \, x_4 =0\} = \mathbb E^3$ the Euclidean $3$-space;
\item[-] $\Q^3_{1,0}=\{ x \in \R^4_1 \, | \, x_4 =0\}=\mathbb L^3$  the Minkowski $3$-space, i.e.\ the space $\R^3$ equipped with the flat Lorentzian metric;
\item[-] $\Q^3_{0,1}=\{  x \in \R^4_0 \, |  \, \, ||x||^2 =1    \} =\S^3$  the $3$-sphere;
\item[-] $\Q^3_{1,-1}=\{  x \in \R^4_1 \, | \, \, ||x||^2 =-1    \} =\H^3$  the hyperbolic space;
\item[-] $\Q^3_{1,1}=\{  x \in \R^4_1 \, | \, \, ||x||^2 =1    \} =d\S^3$  the de Sitter space;
\item[-] $\Q^3_{2,-1}=\{  x \in \R^4_2 \, | \, \, ||x||^2 =-1    \} =Ad\S^3$  the anti de Sitter space;
\end{itemize}
For technical reasons, it is convenient to introduce as well the space
$$\Q^3_{2,1}=\{  x \in \R^4_2 \, | \, \, ||x||^2 =1    \} =\widetilde{Ad\S^3},$$
which has signature $(-,-,+)$ and is anti-isometric to $Ad\S^3$ through the involution $(x_1,x_2,x_3,x_4) \mapsto (x_3,x_4,x_1,x_2)$ of $\R^4_2$.
In all cases, $\Q^3_{p,\eps}$ is one of these six pseudo-Riemannian space forms, with curvature 
$\eps \in \{-1,0,1\}$ and signature $(p',3-p')$, where $p' := p$ if $\eps \in \{0,1\}$ and $p'=p-1$ if $\eps=-1$.
 Moreover, the immersion $\Q^3_{p,\eps} \to \R^4_p$ is totally umbilical. It follows that given two tangent vectors fields $X,Y$ on $\Q^3_{p,\eps}$, 
the Gauss formula takes the form
\begin{equation} \label{un} D_X Y =\nabla_X Y -\eps \<X,Y\>x ,\end{equation}
where $D$ denote the flat connection of $\R^4_p$ and $\nabla$ the Levi-Civita connection of $\Q^3_{p,\eps}$.
We shall also need to consider the null cones 
$ {\cal NC}_p := \{ x \in \R^4_p \, | \, \, ||x||^2=0\},$ for $p \in \{1,2\}.$
The induced metric on ${\cal NC}_p$ is degenerate since $ x \in x^{\perp} = T_x {\cal NC}_p$.

\bigskip

In the non-flat case $\eps \neq 0,$ the concept of \em polar surface \em is helpful:
 let $\varphi: \M \to \Q^3_{p,\eps}$ an immersed surface with unit normal vector $N$ and set $\eps':=||N||^2.$  
The map $N: \M \to \Q^3_{p,\eps'}$  is an immersion if and only if no principal curvature of $\varphi$ vanishes.  In this case, $N$ is called the \em polar surface \em of $\varphi$. We point out two elementary facts: \textit{(i)}  $\varphi$ is the polar of $N$ and \textit{(ii)} if $\ka$ is a principal curvature of $\varphi$, then $\ka^{-1}$ is a principal curvature of $N.$ It follows that  $\varphi$ has one constant principal curvature if and only if so does $N$. Depending on the curvature of $\Q^3_{p,\eps}$ and on the causal character of $\varphi$, its polar $N$ may be valued in the same space $\Q^3_{p,\eps}$, or in the space $\Q^3_{p,-\eps}$, that we call for this reason the \em polar space \em 
of $\Q^3_{p,\eps}$. For example, the polar of a surface in $\H^3$ is a spacelike surface of $d\S^3$ and conversely. 
This, and the elementary fact \textit{(ii)} stated above, show that
 both the classification result and the examples of \cite{AlGa2} can be deduced from the corresponding classification result and examples of \cite{AlGa1}.

\bigskip

\textbf{Main theorem}

\medskip

 \em

Let $\Sigma$ be a connected, oriented surface of  $\Q_{p,\eps}^3$ which is  free of  umbilic points and such that
  one of its principal curvatures is constant equal to $r^{-1}, \, r >0$. 
Denote by $N$ its unit normal vector and set $\eps':=||N||^2.$  
 Set  $\eps'':=1$
(resp.\ $\eps'':=-1$) if the directions of the constant principal curvature are  spacelike (resp.\ timelike). The \em critical constant \em  of $\Sigma$
is the real number $c:= \eps''  ( \eps+ \eps'  r^{-2} ).$  Then:
\begin{itemize}

\item[-] If $c>0$,   there exists a regular curve $\ga$ in $\Q^3_{p,\eps \eps' \eps''}$  such that $\Sigma$ may be parametrized by the following immersion
$$\varphi(u,v) := |1 +\eps \eps'r^2|^{-1/2} \Big(  \ga(u) + r \big( \cos (v) e_1(u)  + \sin(v) e_2 (u) \big)  \Big),$$
where $(e_1,e_2)$ is an orthonormal frame of the normal space of $\ga$ (which has definite induced metric).
If $\eps'' =\eps'$, $\Sigma$  is an \em elliptic tube, \em i.e.\ the set of  points at distance  
$d$ from  $\ga \in \Q_{p,\eps}^3$, where 
$d := \left\{r, \tan^{-1} (r), \tanh^{-1} (r) \right\}$ if $\eps \eps'=\{0,1,-1\}$ respectively. If $\eps''=- \eps'$ (which implies $\eps'' =\eps$ and $r > 1$), then $\Sigma$  is the polar of  an elliptic tube of $\Q^3_{p,-\eps}$.

\item[-] If  $c <0 $, then $\Sigma$  is a \em hyperbolic tube, \em i.e.\  there exists a regular curve $\ga$ in $\Q^3_{p,-\eps\eps'\eps''}$  such that $\Sigma$
 may be parametrized by the following immersion
$$\varphi(u,v) :=|1 +\eps \eps'r^2|^{-1/2}   \Big(  \ga(u) + r \big( \cosh (v) e_1(u)  + \sinh(v) e_2 (u) \big)  \Big),$$
where $(e_1,e_2)$ is an orthonormal frame of the normal space of $\ga$  (which has indefinite induced metric) with $-||e_1||^2=||e_2||^2=\eps''$. 

\item[-]  If $c=0 $ (which implies $\eps=-\eps'$ and $r=1$), then $\Sigma$  is a \em parabolic tube, \em i.e.\ there exists a regular curve $\de=(\de_+,\de_-)$ in $\Q_{p,\eps}^3 \times \Q^3_{p,-\eps}$  satisfying $\<\de_{\pm},\de_{\mp}\>=\<\de_{\pm}',\de_{\mp}\> =0,$ such that 
 $\Sigma$ may be parametrized by the following immersion
$$\varphi(u,v) :=   \left(1 -\eps \eps'' \frac{v^2}{2} \right)\de_+(u) + v e_1(u)+\frac{v^2}{2}\de_-(u) ,$$ 
where $||e_1||^2=\eps''$ and $\<e_1,\de_{\pm}\>=\<e_1,\de'_{\pm} \>=0$. In particular, if the  curve $\de_+$ (resp.\ $\de_-$) is regular, then $(\de_{-},e_1)$ (resp.\ $(\de_+,e_1)$) is an orthonormal frame of the normal space of  $\de_+$ in $\Q^3_{p,\eps}$ (resp.\ of $\de_-$ in $\Q^3_{p,-\eps}$). 
Finally, exchanging the r\^oles of $\de_+$ and $\de_-$ yields the polar of $\Sigma$.
\end{itemize}

\em

\bigskip


\vspace{2em}

\hspace{-5.5em}  \begin{tabular}{| c|c |  c | c | c | c  | p{1cm} }
     \hline
     Space  & Caus. charac. 
	     &    \multirow{2}{*}{$(\eps,\eps',\eps'')$} &  \multirow{2}{*}{$d=$} &   \multirow{2}{*}{Parametrization of $\Sigma$} &   \multirow{2}{*}{Curve  in}    \\
     form &  of $\Sigma$
     &   & &  &   
 \\  \hline \hline
   $\mathbb E^3$ &   \multirow{5}{*}{spacelike} &$( 0,1,1)$&     $r$ & $\ga+d(\cos(v)e_1+\sin(v)e_2)$ & $\mathbb E^3$    \\    \cline{1-1}   \cline{1-1} \cline{3-6}  \cline{3-6}
$ \S^3$  &   &  $(1,1,1)$& $\tan^{-1}(r)$ & $\cos(d) \ga+\sin(d)\big(\cos(v)e_1+\sin(v)e_2\big)$     & $\S^3$ \\   \cline{1-1} \cline{3-6}   \cline{1-1} \cline{3-6} \cline{1-1} \cline{3-6}
    \multirow{3}{*}{$ \H^3$}&  & \multirow{3}{*}{ $(-1,1,1)$}  & $\tanh^{-1}(r), r<1$ & $\cosh(d) \ga+\sinh(d)\big(\cos(v)e_1+\sin(v)e_2\big)$ &  $\H^3 $    \\  \cline{4-6}
                             &  &  & $\coth^{-1}(r), r>1$ &$\sinh(d) \ga+\cosh(d)\big(\cosh(v)e_1+\sinh(v)e_2\big)$  & 
 $d\S^3 $, spacelike     \\ \cline{4-6}
     &  & & $r=1$ &  $ \left(1 +\frac{v^2}{2} \right)\de_+ + v e_1 + \frac{v^2}{2}\de_-$   &  $\H^3/d\S^3$  
   \\  \hline  \hline  
    \multirow{3}{*}{${\mathbb L}^3$}&  spacelike & $ (0,-1,1)$  & \multirow{3}{*}{ $r$}  & $\ga+d(\cosh(v)e_1+\sinh(v)e_2)$ & 
 $\mathbb L^3, $  spacelike   \\    \cline{2-3} \cline{5-6}
    &  \multirow{2}{*}{timelike}    &  $(0,1,1)$  & &  $ \ga+d(\cos(v)e_1+\sin(v)e_2)$ &    $\mathbb L^3, $  timelike    \\    \cline{3-3} \cline{5-6}
     & &  $(0,1,-1)$  &     &  $\ga+d(\cosh(v)e_1+\sinh(v)e_2)$ &    $\mathbb L^3, $ spacelike    \\ \hline \hline  
	   \multirow{5}{*}{$d\S^3$ }& \multirow{3}{*}{spacelike}   &   \multirow{3}{*}{$(1,-1,1)$} &$\tanh^{-1}(r), r<1$ &$\cosh(d) \ga+\sinh(d)\big(\cosh(v)e_1+\sinh(v)e_2\big)$  & $d\S^3$, spacelike   \\  \cline{4-6}   
   &&  & $\coth^{-1}(r), r>1$  &$\sinh(d) \ga+\cosh(d)\big(\cos(v)e_1+\sin(v)e_2\big)$ &   $\H^3$     \\ \cline{4-6}   
  &&  & $r=1$ &$  \left(1 +\frac{v^2}{2} \right)\de_+ + v e_1+ \frac{v^2}{2}\de_-$ &    $d\S^3/\H^3$     \\ \cline{2-6}
  &  \multirow{2}{*}{timelike}& $(1,1,1)$ &  \multirow{2}{*}{$\tan^{-1}(r)$}  &$\cos(d) \ga+\sin(d)\big(\cos(v)e_1+\sin(v)e_2\big)$  & $d\S^3$, timelike \\ 
   & &$(1,1,-1)$ &  & $\cos(d) \ga+\sin(d)\big(\cosh(v)e_1+\sinh(v)e_2\big)$ &   $d\S^3$, spacelike    \\ \hline \hline  
    \multirow{4}{*}{$ Ad\S^3$} & spacelike   & $ (-1,-1,1)$  & $\tan^{-1}(r)$ & $\cos(d) \ga+\sin(d)\big(\cosh(v)e_1+\sinh(v)e_2\big)$ & $Ad\S^3$, spacelike  \\   \cline{2-6}
 &   \multirow{3}{*}{timelike}  & $(-1,1,1)$&  $\tanh^{-1}(r), r<1$ & $\cosh(d) \ga+\sinh(d)\big(\cos(v)e_1+\sin(v)e_2\big)$ & $Ad\S^3$, timelike   \\   \cline{3-6}
    &&  $(-1,1,-1) $&  $\coth^{-1}(r), r>1$ & $\sinh(d) \ga+\cosh(d)\big(\cos(v)e_1+\sin(v)e_2\big)$ & $\widetilde{Ad\S^3}$  \\ 
 \cline{3-6}   
    && $ (-1,1,-1)$ &  $r=1$ & $  \left(1 -\frac{v^2}{2} \right)\de_+ + v e_1+ \frac{v^2}{2}\de_-$ & 
$ Ad\S^3 /\widetilde{Ad\S^3}$ 
 \\ 
     \hline
   \end{tabular}

\section{The generating curve}
Let $\varphi: \M \to \Q^3_{p,\eps}$ an immersed, connected, orientable surface in $\Q^3_{p,\eps}$.   We assume that $\varphi$ has no umbilic point and that the shape operator $-dN$ is real diagonalizable.
Hence there exists a  coordinate system $(u,v)$ such that
\begin{eqnarray*}
N_u &=& -\ka_1 \varphi_u \\
N_v &=& -\ka_2 \varphi_v,
\end{eqnarray*} 
where $\ka_1$ and $\ka_2$ are the principal curvatures of $\varphi.$ We assume that $\ka_2:=r^{-1}$ is a positive constant. 
We introduce the coefficients of the first and second fundamental form:
$$ E:= \<\varphi_u, \varphi_u\> \quad \quad G:= \<\varphi_v,\varphi_v\>$$
$$ e:= -\<N_u, \varphi_u\> \quad \quad g:= -\<N_v,\varphi_v\>.$$
In particular, 
$$ \ka_1 := \frac{e}{E} \quad \quad \ka_2:= \frac{g}{G}.$$

Since
$$\pa_v \left( \varphi + r N \right) = 0,$$
there exists a function  $u \mapsto \tilde{\ga}(u) \in \R^4$ of the real variable such that 
$$\tilde{\ga}(u)=\varphi(u,v) + r N(u,v).$$
Moreover, by the assumption that $\varphi$ is free of umbilic points, we have that $\tilde{\ga}'= \pa_u \left( \varphi + r N \right) \neq 0,$ hence
$\tilde{\ga}$ is a regular curve.
In the flat case $\eps=0$, we have $\tilde{\ga} \in \Q^3_{p,0}$. In the non flat cases $\eps= \pm 1,$ we have
$$ ||\tilde{\ga}||^2 = ||\varphi||^2  + 2 r\< \varphi, N\> + r^{2} ||N||^2=
 \eps(1+ \eps \eps' r^{2} )= \eps \eps'r^{2} (\eps+ \eps' r^{-2})=\eps \eps' \eps'' r^2 c.$$
We obtain the \em generating curve \em $\ga$ by normalizing $\tilde{\ga}$:

$$\ga := \left\{ \begin{array}{ll} (\eps' \eps'' \frac{c}{|c|}) r^{-1} |c|^{-1/2} \tilde{\ga} & \quad 
\mbox{ if } ||\tilde{\ga}||^2 \neq 0, \\  &\\
  \eps'' \eps' \tilde{\ga}  & \quad \mbox{ if }   ||\tilde{\ga}||^2 =0. 
\end{array} \right.$$
(the reason for the choice of the factor $\eps \eps'' \frac{c}{|c|} = \pm 1$ will become clear in a moment).

If $\eps \eps' \in\{ 0,1\}$, the rescaled curve is contained in $\Q^3_{p,\eps},$  just like the surface $\varphi(\M)$, while
if $ \eps \eps' =-1,$ we observe a trichotomy phenomenon, in the sense that the situation depends on the sign of $1-r^2$:
\begin{itemize}
\item[-] If $ r <1 $, the rescaled curve $\ga$ belongs to $\Q^3_{p,\eps,}$;  
\item[-] If $ r > 1$,  the  curve
 $\ga$ belongs to  $\Q^3_{p,-\eps}$,  the polar space of  $\Q^3_{p,\eps}$; 
\item[-] If $r=1$  (parabolic case), the curve $\ga$ belongs to the null cone ${\cal NC}_p$.
\end{itemize}

\section{The geodesic foliation}
\begin{lemm} \label{geod}
The integral curves of $\pa_v$ are pre-geodesics of the induced metric~$\varphi^*\<.,.\>$.
\end{lemm}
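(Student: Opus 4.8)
The plan is to show that along each integral curve $v\mapsto\varphi(u_0,v)$ of $\pa_v$ the intrinsic acceleration $\nabla_{\pa_v}\varphi_v$ — with $\nabla$ the Levi-Civita connection of the induced metric $\varphi^*\<.,.\>$ — is proportional to the velocity $\varphi_v$, which is exactly the pre-geodesic condition. First I would record two consequences of the no-umbilic-point hypothesis: since $\ka_1\neq\ka_2=r^{-1}$ everywhere, the self-adjointness of the shape operator, i.e.\ $\<N_u,\varphi_v\>=\<N_v,\varphi_u\>$, gives $\ka_1\<\varphi_u,\varphi_v\>=\ka_2\<\varphi_u,\varphi_v\>$, hence $F:=\<\varphi_u,\varphi_v\>=0$; and, $E$ and $G$ being nowhere zero (they are the denominators defining $\ka_1,\ka_2$), the induced metric is nondegenerate with $(\varphi_u,\varphi_v)$ an orthogonal frame (in particular $\varphi_v\neq 0$). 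Consequently the pre-geodesic condition reduces to the single scalar equation $\<\nabla_{\pa_v}\varphi_v,\varphi_u\>=0$.

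Next I would pass to the flat ambient space: viewing $\M$ inside $\R^4_p$, the $\R^4_p$-acceleration of the curve is simply $\varphi_{vv}=D_{\pa_v}\varphi_v$, where $D$ is the flat connection of $\R^4_p$, and by the Gauss formula for $\M\subset\R^4_p$ the vector $\nabla_{\pa_v}\varphi_v$ is its component tangent to $\M$, the complementary part being normal to $\M$ and in particular orthogonal to $\varphi_u$. So it suffices to prove $\<\varphi_{vv},\varphi_u\>=0$. Differentiating $0=F=\<\varphi_v,\varphi_u\>$ in $v$ turns this into $\<\varphi_v,\varphi_{uv}\>=0$, i.e.\ $\pa_u G=0$. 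To obtain the last identity I would exploit the two structural equations $\varphi_v=-rN_v$ and $N_u=-\ka_1\varphi_u$: they give
\begin{equation*}
\varphi_{uv}=-r\,\pa_uN_v=-r\,\pa_vN_u=r(\pa_v\ka_1)\varphi_u+r\ka_1\varphi_{uv},
\end{equation*}
so $(1-r\ka_1)\varphi_{uv}=r(\pa_v\ka_1)\varphi_u$; since $\ka_1\neq r^{-1}$ this shows $\varphi_{uv}$ is a scalar multiple of $\varphi_u$, hence $\<\varphi_v,\varphi_{uv}\>$ is a multiple of $F=0$, and $\pa_u G=2\<\varphi_v,\varphi_{uv}\>=0$, as wanted.

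I do not expect a genuine obstacle: the computation is short and purely local. The only thing that really needs watching is that the no-umbilic-point hypothesis is used twice — once to get the orthogonality $F=0$, and, more essentially, once to divide by $1-r\ka_1$ — which is precisely where $\ka_1\neq\ka_2$ enters; everything else is formal and, notably, uniform across all six space forms and both causal characters, since neither the ambient curvature $\eps$ nor the causal type of $\varphi_v$ plays any role as long as the induced metric stays nondegenerate. (One could alternatively deduce $\pa_u G=0$ from the Codazzi equation $g_u=\tfrac12 G_u(\ka_1+\ka_2)$ together with $g=r^{-1}G$, but the derivation above avoids having to recall its explicit form.)
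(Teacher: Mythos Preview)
Your proof is correct. Both you and the paper reduce the claim to $G_u=0$ (equivalently $\Gamma_{22}^1=0$), and both use the no-umbilic hypothesis to divide by $\ka_1-\ka_2$ at the decisive step; the difference is only in how $G_u=0$ is reached. The paper invokes the Codazzi equation $g_u=\tfrac{G_u}{2}(\ka_1+\ka_2)$ and combines it with $g_u=G_u\ka_2$ (from $\ka_2=g/G$ constant) to obtain $\tfrac{G_u}{2}(\ka_1-\ka_2)=0$. You instead work directly in the flat ambient $\R^4_p$, differentiating the relations $\varphi_v=-rN_v$ and $N_u=-\ka_1\varphi_u$ to get the stronger intermediate fact that $\varphi_{uv}$ is itself a scalar multiple of $\varphi_u$. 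Your route is more self-contained (no Codazzi needed, as you note) but is tied to the hyperquadric model, whereas the paper's argument is intrinsic to the surface--space-form pair. You also spell out the orthogonality $F=0$, which the paper takes for granted when it lists only $E,G,e,g$.
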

\begin{proof}
We use Codazzi equation 
$$ g_u=\frac{G_u}{2}  (\ka_1+\ka_2),$$ 
together with the assumption that $\ka_2 =\frac{ g}{G}$ is constant, so that
$$ g_u= \frac{G_u g}{G}=G_u \ka_2.$$
Substracting, we get
$$ 0 = \frac{G_u}{2}(\ka_1-\ka_2),$$ 
which implies that $G_u$ vanish. Hence $\Gamma_{22}^1=-\frac{G_u}{2E} $ vanishes as well.
Therefore $\nabla_{\pa_v}\pa_v $ is collinear to $\pa_v,$ which is the claim.
\end{proof}

If follows from the lemma above that we may reparametrize $\varphi$ in such a way that the 
 integral curves of $\pa_v$ are geodesics parametrized by arclength, i.e.\ $|G|=1$. It is therefore convenient to set $\eps'':=G $. Since the integral curves of $\pa_v$ are now geodesics,  their acceleration vector in $\R^4$ $\varphi_{vv}$ must be contained in the plane spanned by $N$ and $\varphi$. Using Equation \ref{un} and the fact that $\tilde{\ga}=\varphi+ rN$, we deduce

\begin{eqnarray*} \varphi_{vv} &=&     \frac{\<\varphi_{vv},N\>}{||N||^2}N  -  \eps \< \varphi_{v},\varphi_v\> \varphi\\
  &=&\frac{g}{\eps'} N  -  \eps G \varphi\\
&=& G \left(  \ka_2 \, \eps' N - \eps \varphi \right)\\
&=& \eps'' \left(   r^{-1} \eps' \, \frac{\tilde{\ga}-\varphi}{r}   - \eps  \varphi \right)  \\
&=&  \eps' \eps'' r^{-2}\tilde{\ga}-\eps''(\eps+\eps' r^{-2}) \varphi    .
\end{eqnarray*}
 We obtain
\begin{equation}  \label{eq}
\varphi_{vv} +  \eps'' \left(\eps+\eps' r^{-2}\right) \varphi  =    \eps' \eps'' r^{-2}  \tilde{\ga}, \end{equation} 
which, for fixed $u$, is a linear, second order equation in the variable $v$. The solution depends on the sign of
 $c:=  \eps'' \left(\eps+\eps' r^{-2}\right)$, which in turn depends on $\eps, \eps' ,\eps''$ and $r$. Observe that if $\eps \eps'=1$ or $0$, the sign does not depend on $r$, while if $\eps \eps'=-1, $ there is trichotomy, i.e.\ the sign of $c$, and therefore the form of the solution, depend on the value of $r.$

\subsection{The hyperbolic case  $c<0$}
Setting $\la:=(-c)^{1/2}$, Equation \ref{eq} becomes $$\varphi_{vv} -\la^2 \varphi =    \eps' \eps''  r^{-2}\tilde{\ga}.$$
The  general solution is
\begin{equation*}\varphi(u,v) =
  A(u) \cosh (\la v) + B(u) \sinh ( \la v) + C(u),
\end{equation*}
where  $A$ and $B$ are two  $\R^4$-valued functions of the variable $u$ and
\begin{eqnarray*} C &=&  \frac{ \eps' \eps''  r^{-2}}{c}\tilde{\ga}\\
&=&-\frac{ r^{-2}}{c} r (-c)^{1/2} \ga\\
&=& |1+ \eps \eps' r^2|^{-1/2}  \ga . 
\end{eqnarray*}

In order to obtain the claimed formula for 
$\varphi$, we differentiate the expression $\varphi=A \cosh (\la v) + B \sinh ( \la v) +  |1+ \eps \eps' r^2|^{-1/2}  \ga$:
\begin{eqnarray*}\varphi_u &=&
  A' \cosh (\la v) + B'  \sinh ( \la v) +  |1+ \eps \eps' r^2|^{-1/2}  \ga', \\
\varphi_v &=&
  A \, \la \sinh (\la v) + B  \, \la  \cosh ( \la  v) .
\end{eqnarray*}
Using first the fact that $ ||\varphi_v||^2$ is constant, we obtain that $||A||^2=-||B||^2$ and $\<A,B\>=0$. 

Next, from the expression
 $\< \varphi_u, \varphi_v\>=0,$ we deduce that  $\<A, \ga'\>$, $\<B,\ga'\>$ and $\<A',B\>=-\<A,B'\>$ vanish. When $\eps \neq 0,$ observing  that $||\varphi||^2$ is constant, we  furthermore obtain the vanishing of
$\<A, \ga\>$ and $\<B,\ga\>$, while, if $\eps=0$, $A$ and $B$ are obviously $\R^3$-valued. 
In both cases, for fixed $u$, the
hyperbola $v \mapsto A \cosh (\la v) + B \sinh ( \la v) $ lies in the normal space of the curve $\ga$  in $T_{\ga} \Q_{p,\eps}^3$. Finally,  observing that 
$$\eps''=||\varphi_v||^2=-||A||^2 (-c)=||B||^2(-c),$$ 
 we find it  convenient to normalize $(A,B)$, introducing the orthonormal frame 
$(e_1,e_2):=  (-c)^{1/2} (A,B).$
We therefore may conclude,  writing
\begin{eqnarray*}\varphi(u,v) &=& C(u)+
  \la^{-1} \left(e_1(u) \cosh (\la v) + e_2(u) \sinh ( \la v) \right) \\
 &=& |1+ \eps \eps' r^2|^{-1/2}  \ga(u)+  
r |1+ \eps \eps' r^2|^{-1/2}  \Big(e_1(u) \cosh (\la v) + e_2(u) \sinh ( \la v) \Big)\\
&=& |1+ \eps \eps' r^2|^{-1/2} \left(  \ga(u)+  
r \Big(e_1(u) \cosh (\tilde{v}) + e_2(u) \sinh ( \tilde{ v}) \Big) \right) ,\\
\end{eqnarray*}
where we have set $\tilde{v}:=\la v.$ This is the claimed formula. 
As a final comment,  we observe that if we release the condition $\<A',B\>=0$, the coordinate field $\pa_u$ is not anymore principal, but $\pa_v$ is still the principal direction associated to the constant principal curvature $\ka_2 = r^{-1}$.

\subsection{The elliptic case $c >0$}
The elliptic case  is analogous to (and somehow simpler than) the hyperbolic one and left to the Reader.


\subsection{The parabolic case  $c=0$}
Here $r= 1$ and $\eps\eps'=-1$ (observe in particular that the parabolic case does not occur in the flat case $\eps=0$) and Equation (\ref{eq}) becomes
$$ \varphi_{vv} =  \eps' \eps''   \tilde{\ga}=  \ga.$$ 
The general solution is
$$\varphi(u,v) = \frac{1}{2}      \ga(u)  \, v^2+ A(u)v + B(u) ,$$
where $A$ and $B$ are two  $\R^4$-valued functions of the variable $u$.
Differentiating
\begin{eqnarray*}\varphi_u &=& \frac{1}{2}    \ga' \, v^2+ A'v + B', \\
\varphi_v &=&     \ga  \, v+ A,
 \end{eqnarray*}
and using first the facts that $||\varphi||^2=\eps$ and $ ||\varphi_v||^2=\eps''$, we obtain  
\begin{eqnarray*} ||B||^2 &= & \eps\\
||A||^2 &=& \eps'' \\
\<A,B\>&=&0\\
  \<A,\ga\>&=&0\\
||A||^2 +  \<B, \ga\>&=& 0,  \quad \mbox{so} \quad \<B, \ga\>=-\eps''.
\end{eqnarray*}
From $\<\varphi_u,\varphi_v\>=0$ we deduce moreover
\begin{eqnarray*} \<A,B'\> &= & 0\\
   \< A',A\> + \<\ga, B'\> &=& 0,  \quad \mbox{so} \quad \<B, \ga'\>=-\<B',\ga\> =\<A,A'\>=0\\
\frac{1}{2} \<\ga',A\> + \< \ga, A'\>&=&0, \quad \mbox{so} \quad \<A, \ga'\> =0.
 \end{eqnarray*}
We now introduce the orthonormal triple  $(\de_+,\de_-,e_1):=(B,  \eps \eps'' B+\ga,A  )$. The calculations above imply $(||\de_+||^2, ||\de_-||^2, ||e_1||^2)=(\eps, -\eps,-\eps'')$ and $\<\de_{\pm}',e_1\>=\<\de_{\pm}',\de_{\mp}\>=0$. 
Since the curve $\ga = -\eps \eps'' \de_+ + \de_-$ is regular, the velocity vectors $\de_+'$ and $\de_-'$ cannot vanish simultaneously, i.e.\ $(\de_+,\de_-)$ is regular. 
 We therefore obtain the required formula
$$\varphi(u,v) :=   \left(1 -\eps \eps'' \frac{v^2}{2} \right)\de_+(u) + v e_1(u)+\frac{v^2}{2}\de_-(u).$$
To conclude, we see that 
if $\de_+$ (resp.\ $\de_-$) is a regular curve, then  $(e_1,\de_-)$(resp.\ $(e_1,\de_+)$)  is an orthonormal  frame of the normal space of $\de_+$ (resp.\ $\de_-$). 


\section{Some examples}

\subsection{An exemple in $\mathbb E^3$}
We first describe a very simple example showing that the completeness assumption in the classification result of \cite{ST} is necessary.  Using the decomposition ${\mathbb E^3} := \R \times \C$, we set:
\begin{eqnarray*}
e_0(u) & := &  ( 0,i e^{iu })\\
e_1(u) & := & ( 0, e^{iu })\\
e_2(u) & := &  ( 1,0 )
\end{eqnarray*}
and define the immersion:
$$ \varphi(u,v) := \int_0^u h(s) e_0(s) ds \,  +r \Big( \cos (v) e_1(u) + \sin (v) e_2(u) \Big),$$
  where $r >0$ and $h \in C^\infty(\R)$. 
According to the Main Theorem, the  principal curvature $\ka_2$ of $\varphi$ is constant equal to $r^{-1}$. A straightforward calculation shows that the other principal curvature of $\varphi$ takes the form
$$ \ka_1(u,v)= \frac{\cos(v)}{h(u)  + r \cos(v)  }.$$
Observe that $\varphi$ fails to be an immersion if $\inf_{\R} |h| \leq r$. On the other hand, $\varphi$ is umbilic  at the points where $h$ vanishes, which are exactly the points where the curve $\ga(u):=\int_0^u h(s) e_0(s) ds$ fails to be regular. An interval where $h$ vanishes corresponds to  a vertex of the trajectory of $\ga$.
 Around such a point, the image of $\varphi$ consists of two (non-complete) "half-tubes", smoothly connected by a portion of sphere.

\subsection{An exemple in $\mathbb L^3$} \label{L3}
A slight modification of the previous example shows that, unlike the $\mathbb E^3$ case,  there exist complete surfaces in $\mathbb L^3$ with one principal constant curvature which are not free of umbilic points.
The frame $(e_0,e_1,e_2)$ defined in the previous section is still orthornomal in ${\mathbb L^3} := \R \times \C$, with $||e_2||^2=-1$, and  we
 define the immersion:
$$ \varphi(u,v) := \int_0^u h(s) e_0(s) ds +r \Big( \cosh (v) e_1(u) + \sinh (v) e_2(u) \Big),$$
  where $r >0$ and $h \in C^\infty(\R)$. Again, the principal curvature $\ka_2$ is constant equal to $r^{-1}$, while the other principal curvature is given by
$$ \ka_1(u,v)= \frac{\cosh(v)}{h(u)  + r \cosh(v)  },$$
so $\varphi$ is umbilic  at the vanishing points of $h$. On the other hand, $\varphi$ is a complete immersion if and only if $\inf_{\R}  h >- r$, so, with a suitable choice of  $h$, we obtain a complete immersion with the claimed property.
 The difference with the previous case comes from the non-compactness of the hyperbolas, which allows, at the vertices of  $\ga(u):=\int_0^u h(s) e_0(s) ds$, to have two complete hyperbolic half-tubes smoothly connected by a portion of hyperboloid.

\subsection{The examples of Aledo and G\'alvez}
In \cite{AlGa1} and \cite{AlGa2}, other examples of complete surfaces with one constant principal curvature enjoying both umbilic and non-umbilic   points are described implicitely through their first and second fundamental forms. We explain how these examples can be recovered from our construction.

\medskip

The first example  is a surface of $\H^3$  with one constant principal curvature $\ka_2=1$, while the other principal curvature satisfies
$$\ka_1(u,v)=1-\frac{h(u)}{2+v^2 h(u)},$$
where $h: \R \to \R$ is some non-negative function. The corresponding surface is umbilic at the  points where $h$ does not vanish. According to our Main Theorem, if  $\de_+$ is a regular curve of $\H^3$ and $(e_1,\de_-)$ is an orthonormal frame of  the normal space of $\de_+$ in $T_{\de_+} d\S^3$, then the
 immersion 
$$\varphi(u,v) = \left(1 +\frac{v^2}{2} \right)\de_+(u) + v e_1(u)+ \frac{v^2}{2}\de_-(u)$$
 as one principal curvature $\ka_2$ constant equal to $1$.
A routine calculation shows that the other principal curvature of $\varphi$ is given by 
$$\ka_1(u,v)=1 - \frac{1-b(u)}{1- a(u) v +(1-b(u))\frac{v^2}{2}  },$$
where $a:=\< \de_+'', e_1\> $ and $b:= \< \de_+'', \de_-\> $. It is easily seen that the surface becomes umbilic whenever $b=1$.
We obtain the example of Aledo and G\'alvez taking $a=0$  and $h=1-b$, so in particular $\de_-$ and $e_1$ are respectively the normal and binormal vectors of $\de_+$, and $b$ is the curvature of $\de_+$. In this case the set of umbilic points of $\varphi$   correspond to points of the curve $\de_+$ with curvature $1$. 

\medskip

The second example of  \cite{AlGa1} resembles that described in Section \ref{L3}: it is
  a surface of $\H^3$  with one constant principal curvature 
$R <1$, while the other principal curvatures satisfies, in   a local coordinate system $(x,y)$,
\begin{equation} \label{Gal} \frac{h(y) + R(1-R^ 2) e^{\sqrt{1-R^2}y}}{R \, h(x)+ (1-R^2) e^{\sqrt{1-R^2}y}}, 
\end{equation}
where $h: \R \to \R$ is some non-negative function.
 The corresponding surface is umbilic at  points where $h$ vanishes. According to our Main Theorem, such surface may be parametrized, at least away from umbilic points,  by an immersion of the form
$$\varphi(u,v) =(r^2-1)^{-1/2} \left( \ga+ r\big(\sinh(v)e_1+\cosh(v)e_2\big) \right) ,$$
where $\ga$ is a regular, spacelike curve of $d\S^3$ and $(e_1,e_2)$ is an orthonormal frame of  the normal space of $\ga$ in $T_\ga d\S^3$.
We recover Aledo and G\'alvez's example considering $\ga,e_1$ and $e_2$ in such a way that the orthonormal moving frame $ u \mapsto F=(\ga, e_0,e_1,e_2) \in SO(3,1)$ is a solution of the differential system
$$ F^{-1} F' = \left( \begin{array}{cccc} 0 & \tilde{h} & 0 &0 \\
               -\tilde{h}  &0& -1 & 1 \\ 0& 1 &0 &0 \\ 0 & 1 & 0 &0 \end{array} \right).$$
A short calculation gives the following expression for the corresponding principal curvature $\ka_1$:
\begin{equation} \label{Moi} \ka_1(u,v)=\frac{r \tilde{h}(u) + e^{v}}{\tilde{h}(u)+r e^{v}}, 
\end{equation}
which is exactly Formula (\ref{Gal}), setting $\tilde{h}:=(1-R^2)h, \,  v:=\sqrt{1-R^2} \, y$ and $r:=R^{-1}$. Since $\ga'(u)=h(u)e_0(u)$,  the umbilic points of $\varphi$ correspond once again to vertices of the curve $\ga$.

Henri Anciaux \\
Universidade de S\~ao Paulo \\
	Instituto de Matem\'atica e Estat\'istica \\ 
	  Rua do Mat\~ao, 1010, \\ S\~ao Paulo, 05508-090, Brazil \\
	e-mail: \emph{henri.anciaux@gmail.com}
\end{document}